\documentclass[reqno]{amsart}

\usepackage{hyperref}
\usepackage[all]{xy}

\numberwithin{equation}{section}

\newtheorem{thm}[equation]{Theorem}
\newtheorem{cor}[equation]{Corollary}
\newtheorem{lem}[equation]{Lemma}

\theoremstyle{remark}

\newtheorem{rem}[equation]{Remark}

\newcommand{\bQ}{\mathbb{Q}}
\newcommand{\bR}{\mathbb{R}}

\newcommand{\bZ}{\mathbb{Z}}
\newcommand{\cC}{\mathcal{C}}
\newcommand{\Diff}{\mathrm{Diff}}
\newcommand{\GL}{\mathrm{GL}}
\newcommand{\hofib}{\mathrm{hofib}}
\newcommand{\Homeo}{\mathrm{Homeo}}
\newcommand{\Hom}{\mathrm{Hom}}
\newcommand{\id}{\mathrm{id}}
\newcommand{\Torr}{\mathrm{Tor}}
\newcommand{\trf}{\mathrm{trf}}
\newcommand{\inter}[1]{\mathrm{int}(#1)}
\newcommand{\Cob}{\mathrm{Cob}}
\newcommand{\hocolim}{\mathrm{hocolim}}
\newcommand{\Fr}{\mathrm{Fr}}
\newcommand{\Top}{\mathrm{Top}}
\newcommand{\rp}{\mathbb{R}\mathbb{P}}

\title{Diffeomorphisms of odd-dimensional discs, glued into a manifold}

\author{Johannes Ebert}
\email{johannes.ebert@uni-muenster.de}
\address{
Mathematisches Institut\\
WWU M{\"u}nster\\
Einsteinstr. 62\\
48149 M{\"u}nster\\
Germany
}

\date{\today}

\begin{document}

\begin{abstract}
For a compact $(2n+1)$-dimensional smooth manifold, let $\mu_M : B \Diff_\partial (D^{2n+1}) \to B \Diff (M)$ be the map that is defined by extending diffeomorphisms on an embedded disc by the identity. 

By a classical result of Farrell and Hsiang, the rational homotopy groups and the rational homology of $ B \Diff_\partial (D^{2n+1})$ are known in the concordance stable range. 

We prove two results on the behavior of the map $\mu_M$ in the concordance stable range. Firstly, it is \emph{injective} on rational homotopy groups, and secondly, it is \emph{trivial} on rational homology, if $M$ contains sufficiently many embedded copies of $S^n\times S^{n+1} \setminus \mathrm{int}(D^{2n+1})$. We also show that $\mu_M$ is generally not injective on homotopy groups outside the stable range. 

The homotopical statement is probably not new and follows from the theory of smooth torsion invariants. The noninjectivity outside the stable range is based on recent work by Krannich and Randal-Williams. The homological statement relies on work by Botvinnik and Perlmutter on diffeomorphism of odd-dimensional manifolds. 
\end{abstract}

\maketitle

\section{Introduction}

For a smooth compact manifold $M$ with boundary, we denote by $\Diff (M)$ the topological group of diffeomorphisms of $M$ and by $\Diff_\partial (M) \subset \Diff (M)$ the subgroup of those diffeomorphisms which agree with the identity near $\partial M$. 
A celebrated classical result by Farrell-Hsiang \cite{FarrellHsiang} states that 
\begin{equation}\label{eqn:farrellhsiang}
\pi_k (B\Diff_\partial (D^{2n+1}))\otimes \bQ \cong 
\begin{cases}
\bQ & k \equiv 0 \pmod 4\\
0 & k \not \equiv 0 \pmod 4
\end{cases}
\end{equation}
in a range of degrees which was originally given by $k<\frac{2n+1}{6}-6$; but \eqref{eqn:farrellhsiang} holds more generally if $k \leq \phi^\bQ(D^{2n})$, where $\phi^\bQ(D^{2n})$ is the \emph{rational concordance stable range for $D^{2n}$} which we briefly recall. 

For a compact smooth manifold $M$, let 
\[
C(M):= \Diff (M\times [0,1], M \times \{0\} \cup \partial M \times [0,1])
\]
denote the concordance diffeomorphism group of $M$, and let $\sigma_+: C(M) \to C(M \times [0,1])$ be the (positive) suspension map defined in e.g. \cite[\S 6.2]{IgusaBook}. Define $\phi(M)$ to be the largest integer $k$ so that the maps $\sigma_+: C(M\times [0,1]^m) \to C(M\times [0,1]^{m+1})$, $m \geq 0$, are all $k$-connected. Similarly, one defines $\phi^\bQ(M)\geq \phi(M)$ using rational connectivity instead of connectivity (this makes sense if $\dim (M) \geq 6$ as $\pi_0 (C(M))$ is abelian in that case, by \cite[Lemma 1.1]{HatcherWagoner}).

Igusa's stability theorem \cite[p.6]{IgusaStability} states that 
\begin{equation}\label{eqn:igusarange}
\phi(M^d) \geq \min(\frac{d-7}{2},\frac{d-4}{3}).
\end{equation}
Recent work by Krannich and Randal-Williams \cite{KrannRW} gives the optimal range in which \eqref{eqn:farrellhsiang} holds. Corollary B of \cite{KrannRW} shows that 
\begin{equation}\label{eqn:optimalranmge}
\phi^\bQ(D^d)=d-4 \; \text{if} \; d \geq 10,
\end{equation}
and hence \eqref{eqn:farrellhsiang} holds if $k \leq 2n-4$, provided that $n \geq 5$, and Theorem A of \cite{KrannRW} improves this slightly to $k \leq 2n-3$, again for $n \geq 5$. These results slightly exceed \cite[Corollary B]{Krannich}. 

For an arbitrary smooth compact and nonempty manifold $M$ of dimension $2n+1$, choose an embedding $D^{2n+1} \to \inter{M}$. Extending diffeomorphisms by the identity gives a gluing map
\[
\mu_M^\partial: \Diff_\partial (D^{2n+1}) \to \Diff_\partial (M); 
\]
we may also consider the composition 
\[
\mu_M: \Diff_\partial (D^{2n+1}) \stackrel{\mu_M^\partial}{\to} \Diff_\partial (M) \to  \Diff (M).
\]
The purpose of this note is to study the effect of the maps $B \mu_M$ and $B\mu_M^\partial$ on rational homotopy and homology. The precise choice of the embedding does not play a role for this question as long as $M$ is connected. This is because the homotopy class of $B \mu_M$ only depends on the isotopy class of the embedding. If $M$ is connected and not orientable, there is only one isotopy class of embeddings, and if $M$ is connected and orientable, there are two such isotopy classes which differ by the reflection automorphism of the group $\Diff_\partial (D^{2n+1})$. 
\begin{thm}[Homotopical theorem]\label{mainthm:homotopy}
For every $(2n+1)$-dimensional manifold $M$, the maps 
\[
(\mu_M )_*: \pi_k (B\Diff_\partial (D^{2n+1}))\otimes \bQ \to \pi_k (B\Diff (M))\otimes \bQ
\]
and 
\[
(\mu_M^\partial )_*: \pi_k (B\Diff_\partial (D^{2n+1}))\otimes \bQ \to \pi_k (B\Diff_\partial (M))\otimes \bQ
\]
are injective when $k \neq 1$ and $k \leq \phi^\bQ(D^{2n})$. 
\end{thm}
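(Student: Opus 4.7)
The plan is to detect the Farrell--Hsiang generators of $\pi_{4j}(B\Diff_\partial(D^{2n+1})) \otimes \bQ$ by higher smooth torsion invariants, and then to check that these detecting classes descend along $\mu_M$ via the additivity of torsion under fibrewise gluing. By \eqref{eqn:farrellhsiang}, only degrees $k = 4j$ with $j \geq 1$ are relevant; in all remaining degrees $k \neq 1$ of the stated range the source group vanishes and there is nothing to prove.

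For the main case, I would invoke the Igusa--Klein higher torsion (equivalently, for rational purposes, the smooth Dwyer--Weiss--Williams torsion). This assigns to any smooth bundle $E \to B$ whose fibres either are closed or have trivialised boundary a cohomology class $\tau_{4j}(E/B) \in H^{4j}(B; \bR)$ for each $j \geq 1$, and Igusa's detection theorem says that $\tau_{4j}$ applied to the universal $D^{2n+1}$-bundle over $B\Diff_\partial(D^{2n+1})$ defines a nonzero linear functional on $\pi_{4j}(B\Diff_\partial(D^{2n+1})) \otimes \bQ$ for every $j$ with $4j \leq \phi^\bQ(D^{2n})$. Next, I would observe that the $M$-bundle obtained by pulling the universal $M$-bundle back along $\mu_M$ decomposes fibrewise as the union of the universal disc bundle and the constant bundle $(M \setminus \inter{D^{2n+1}}) \times B\Diff_\partial(D^{2n+1})$, glued along the trivial bundle $S^{2n} \times B\Diff_\partial(D^{2n+1})$. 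The additivity (cut-and-paste) property of higher torsion then yields
\[
\mu_M^* \tau_{4j}(M) = \tau_{4j}(D^{2n+1}/B\Diff_\partial(D^{2n+1})) + c,
\]
where $c$ is pulled back from a point and therefore vanishes. Consequently, pairing $(\mu_M)_*(x)$ against $\tau_{4j}(M)$ recovers the pairing that detects $x$, giving injectivity of $(\mu_M)_*$ on $\pi_{4j} \otimes \bQ$ throughout the stable range. The statement for $(\mu_M^\partial)_*$ then follows by further naturality along $B\Diff_\partial(M) \to B\Diff(M)$.

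The main obstacle is the additivity input used in the second step. Additivity of Igusa--Klein torsion for closed manifold bundles is well documented, but here one needs a version that handles bundles whose fibres have trivialised boundary (the disc piece) and a fibrewise decomposition along a trivialised codimension-one sphere; it must be set up so that the ``constant'' complementary piece contributes only a class pulled back from a point. This is the single nontrivial input one has to import from the literature on smooth torsion. Once this gluing formula is in place, the rest of the argument reduces to naturality together with the Farrell--Hsiang computation.
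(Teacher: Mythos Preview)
Your approach is the paper's: detect the Farrell--Hsiang classes by higher smooth torsion and use additivity under fibrewise gluing. Two points, however, need more care than your sketch provides.

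First, the class you call $\tau_{4j}(M)$ is \emph{not} defined on $B\Diff(M)$: higher torsion requires the monodromy to act unipotently on fibre homology, so it lives only on $B\Torr(M)$. Your phrase ``pairing $(\mu_M)_*(x)$ against $\tau_{4j}(M)$'' therefore does not make literal sense. The paper fixes this by observing that $\mu_M$ factors as $B\Diff_\partial(D^{2n+1}) \xrightarrow{\tilde\mu_M} B\Torr(M) \to B\Diff(M)$; your additivity argument then shows $\tau_M \circ \tilde\mu_M \sim \tau_{D^{2n+1}}$, giving injectivity of $(\tilde\mu_M)_*$, and one concludes by noting that $\Torr(M)\subset\Diff(M)$ is a union of path components, so the second map is an isomorphism on $\pi_k$ for $k\geq 2$.

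Second, the additivity input you flag as ``the main obstacle'' is precisely what the paper does \emph{not} try to supply: the additivity formula it quotes is only established for closed total fibres. Rather than proving a version with corners or trivialised boundary, the paper reduces the case $\partial M \neq \emptyset$ to the closed case by doubling, via the commuting square
\[
\xymatrix{
B \Diff_\partial (D^{2n+1}) \ar[rr]^-{\mu_M} \ar[d]^{d_{D^{2n}}} & & B \Diff_A (M) \ar[d]^{d_A} \\
B \Diff_\partial (D^{2n+1}\cup_{D^{2n}} D^{2n+1}) \ar[rr] & & B \Diff_\partial (M \cup_{\partial M \setminus \inter{A}} M),
}
\]
together with the lemma that the left vertical map is a rational $\pi_k$-isomorphism for $k \leq \phi^\bQ(D^{2n})+1$. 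That lemma in turn needs the nontrivial fact that the reflection involution on $\Diff_\partial(D^{2n+1})$ acts trivially on rational homotopy in this range.
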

\begin{rem}
Note that $\pi_1 (B\Diff_\partial (D^{2n+1}))=\pi_0 (\Diff_\partial (D^{2n+1}))$ is always a finite group; this is trivial when $n=0$ and follows from \cite{Cerf} for $n=1$. For $n \geq 2$, \cite[Corollaire 2]{CerfIsotopy} and the h-cobordism theorem identifies $\pi_0 (\Diff_\partial (D^{2n+1}))$ with the group of homotopy $(2n+2)$-spheres which is finite by \cite{KervMiln}. 
\end{rem}
\begin{rem}\label{rem:close-to-threshold}
By \cite[Corollary B]{KrannRW}, Theorem \ref{mainthm:homotopy} holds for $k \leq 2n-4$ if $n \geq 5$. Theorem \ref{mainthm:homotopy} is also true for $k = 2n-3$ and $n \geq 5$, since $\pi_{2n-3}(B \Diff_\partial (D^{2n+1})\otimes \bQ=0$ for such $n$ by \cite[Theorem A]{KrannRW}.
\end{rem}
Theorem \ref{mainthm:homotopy} could have been proven with little effort in \cite{BDKW} at latest. It was in fact known by experts and we learnt the statement from Mauricio Bustamante. The proof is given here for sake of completeness and to contrast it with our main result (Theorem \ref{maintheoremhomology} below), which seemingly goes into the opposite direction. 

It concerns the effect of $\mu_M$ in rational homology. Since $B \Diff_\partial (D^{2n+1})$ is a connected $E_{2n+1}$-space (and hence a homotopy commutative H-space), $H_* (B \Diff_\partial (D^{2n+1});\bQ)$ (with the Pontrjagin product) is the free graded-commutative algebra generated by $\pi_* (B \Diff_\partial (D^{2n+1})) \otimes\bQ$. Therefore in the concordance stable range, $H_* (B \Diff_\partial (D^{2n+1});\bQ)$ is a polynomial algebra with one generator in each dimension divisible by $4$. 

Let
\[
U_g^n := \sharp^g (S^n \times S^{n+1})
\]
be the connected sum of $g$ copies of $S^n \times S^{n+1}$, and let 
\[
U^n_{g,1}:= U_g^n \setminus \inter{D^{2n+1}}
\]
be $U_g^n$ with the interior of a disc removed.

\begin{thm}[Homological theorem]\label{maintheoremhomology}
Let $M$ be a connected manifold of dimension $2n+1 \geq 9$ and suppose that $M$ contains an embedded copy of $U_{g,1}^n$. Then the maps 
\[
(\mu_M)_*: \tilde{H}_k (B \Diff_\partial (D^{2n+1});\bQ) \to \tilde{H}_k (B \Diff (M);\bQ)
\]
and 
\[
(\mu_M^\partial)_*: \tilde{H}_k (B \Diff_\partial (D^{2n+1});\bQ) \to \tilde{H}_k (B \Diff_\partial (M);\bQ)
\]
are trivial if $k \leq \phi^\bQ(D^{2n})+1$ and $k \leq \frac{g-4}{2}$.
\end{thm}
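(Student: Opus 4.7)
The plan is to factor $\mu_M^\partial$ through $B\Diff_\partial(U_{g,1}^n)$ and invoke Botvinnik--Perlmutter's identification of the stable rational homology of $B\Diff_\partial(U_{g,1}^n)$ with that of an infinite loop space; this reduces the problem to showing that the induced map from $B\Diff_\partial(D^{2n+1})$ into that infinite loop space is rationally null on $\tilde{H}_*$ in the stated range. Since $M$ contains an embedded copy of $U_{g,1}^n$, after an ambient isotopy we may assume the given disc $D^{2n+1}\subset M$ lies inside it, and then
\[
\mu_M^\partial\colon B\Diff_\partial(D^{2n+1})\xrightarrow{\mu_{U_{g,1}^n}^\partial}B\Diff_\partial(U_{g,1}^n)\longrightarrow B\Diff_\partial(M),
\]
with $\mu_M$ arising by further composition with $B\Diff_\partial(M)\to B\Diff(M)$. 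It therefore suffices to handle $\mu_{U_{g,1}^n}^\partial$. For a suitable tangential structure $\theta$, Botvinnik and Perlmutter construct a parametrised Pontrjagin--Thom / scanning map $\alpha_g\colon B\Diff_\partial(U_{g,1}^n)\to\Omega^\infty\mathbf{MT}\theta$ which is a $\bQ$-homology isomorphism in degrees $*\leq (g-4)/2$; so the task becomes to prove that $\alpha_g\circ\mu_{U_{g,1}^n}^\partial$ vanishes on $\tilde{H}_*(-;\bQ)$ in the common range.

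For the reduction from homology to homotopy, I would use that $B\Diff_\partial(D^{2n+1})$ is a connected $E_{2n+1}$-algebra (in particular a homotopy-commutative H-space), with multiplication coming from diffeomorphisms supported on disjoint sub-discs of a larger disc. By Farrell--Hsiang combined with Milnor--Moore and Cartan--Serre, its rational homology in degrees $*\leq\phi^\bQ(D^{2n})$ is the free graded-commutative Hopf algebra on the rational Hurewicz image of $\pi_*\otimes\bQ$, concentrated in degrees $\equiv 0\pmod 4$. The target $\Omega^\infty\mathbf{MT}\theta$ is an infinite loop space whose rational homology is similarly the free graded-commutative Hopf algebra on primitives $=\pi_*\otimes\bQ$. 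Provided the composite $\alpha_g\circ\mu_{U_{g,1}^n}^\partial$ respects these H-structures rationally---which should follow from the compatibility of Botvinnik--Perlmutter's scanning construction with disc-gluing operations---it is a Hopf algebra map on rational homology, and hence it suffices to prove it vanishes on primitives, equivalently on $\pi_*\otimes\bQ$.

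The vanishing on rational homotopy is then verified geometrically. The composite sends $[f\colon S^k\to B\Diff_\partial(D^{2n+1})]$ to the generalised Miller--Morita--Mumford class $\kappa_c=\pi_!\,c(T_v\pi)\in H^k(S^k;\bQ)$ of the associated $U_{g,1}^n$-bundle $\pi\colon E\to S^k$, for $c\in H^{k+2n+1}(B\theta;\bQ)$. Since the bundle is trivial outside the embedded disc, the outside contribution is constant and dies in positive reduced degree. The inside contribution depends only on the derivative action of $\Diff_\partial(D^{2n+1})$ on $TD^{2n+1}$, and is therefore a \emph{primary} tangential characteristic number. By Farrell--Hsiang and Morlet's theorem, the rational homotopy of $B\Diff_\partial(D^{2n+1})$ in the stable range is a \emph{secondary} algebraic-K-theoretic invariant (coming from $K(\bZ)$), whose image under the derivative classifying map $B\Diff_\partial(D^{2n+1})\to\Omega^{2n+1}BO(2n+1)$ is rationally zero in the relevant degrees $k\equiv 0\pmod 4$ by a straightforward comparison of rational homotopy groups, so all the $\kappa_c$ vanish.

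The principal technical obstacle is establishing the rational H-map property in the middle step: this requires a careful analysis of the interplay between the gluing product on $B\Diff_\partial(D^{2n+1})$ and Botvinnik--Perlmutter's scanning map, and is where the full strength of their cobordism category machinery is needed. Once this compatibility is in place, the vanishing on $\pi_*$ via the derivative argument becomes a routine check and the theorem follows.
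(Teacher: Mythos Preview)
Your overall strategy---reduce to $U_{g,1}^n$, invoke Botvinnik--Perlmutter, and show the resulting map to an infinite loop space is null on reduced rational homology---has the right shape, but there is a genuine gap in how you invoke the key input.

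Botvinnik--Perlmutter's theorem does \emph{not} compute the stable homology of $B\Diff_\partial(U_{g,1}^n)$. Their result is for the handlebody $V_g^n = \natural^g(S^n \times D^{n+1})$ with a disc $D \subset \partial V_g^n$ fixed: the map
\[
\alpha_g\colon B\Diff_D(V_g^n) \longrightarrow Q_{1+(-1)^n g}\bigl(BO(2n+1)\langle n\rangle_+\bigr)
\]
is a homology isomorphism in degrees $\leq (g-4)/2$. Note also that the target is a free infinite loop space $Q(X_+)$ (via Genauer's theorem for cobordism categories of manifolds with boundary), not a Madsen--Tillmann spectrum $\Omega^\infty\mathbf{MT}\theta$. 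To pass from $V_g^n$ to $U_{g,1}^n$ the paper uses the identification $V_g^n \cup_{\partial V_g^n \setminus \inter{D}} V_g^n = U_{g,1}^n$, together with a separate lemma that the doubling map $B\Diff_\partial(D^{2n+1}) \to B\Diff_\partial(D^{2n+1}\cup_{D^{2n}}D^{2n+1})$ is a rational equivalence in degrees $\leq \phi^\bQ(D^{2n})+1$; this is what produces the condition $k \leq \phi^\bQ(D^{2n})+1$ in the statement. This doubling step, and the lemma enabling it, are absent from your outline, and without them there is no Botvinnik--Perlmutter result to apply.

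Your proposed mechanism for the vanishing is also more laborious than necessary. The paper shows that $\alpha_0\colon B\Diff_D(V_0^n) \to Q_1(BO(2n+1)\langle n\rangle_+)$ is zero on reduced homology by observing that the vertical tangent bundle of the universal $D^{2n+1}$-bundle is trivial, so rationally $\alpha_0$ factors through $Q_1(S^0)$; integrally one can even factor through $B\Homeo_D(D^{2n+1})\simeq *$ via the Alexander trick. Hence the stabilisation $B\Diff_D(V_0^n)\to B\Diff_D(V_g^n)$ is already zero on reduced homology, and then one doubles. No H-map compatibility between disc-gluing and scanning, and no MMM-class computation on $\pi_*$, is required; your ``principal technical obstacle'' simply does not arise in the paper's argument.
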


Finally, using the recent work \cite{KrannRW}, we can show that the range for the validity of Theorem \ref{mainthm:homotopy} given in Remark \ref{rem:close-to-threshold} is optimal.

\begin{thm}\label{thm:optimality}
For even $n \geq 6$, there is a closed $(2n+1)$-dimensional smooth manifold $M$ such that the kernel of 
\[
(\mu_M)_*: \pi_{2n-2}(B \Diff_\partial (D^{2n+1})) \otimes \bQ \to \pi_{2n-2}(B \Diff (M)) \otimes \bQ
\]
is nonzero.
\end{thm}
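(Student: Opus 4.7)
The plan is to choose $M = U_g^n = \sharp^g(S^n \times S^{n+1})$ for $g$ sufficiently large (relative to $n$) and to exhibit a nonzero class $\alpha \in \pi_{2n-2}(B\Diff_\partial(D^{2n+1})) \otimes \bQ$ whose image under $(\mu_M)_*$ vanishes. The existence of $\alpha$ is the key input from \cite{KrannRW}: their rational homotopy computations for $B\Diff_\partial(D^{2n+1})$ extend just past the Farrell--Hsiang concordance stable range, and for even $n \geq 5$ they produce a nonzero class in degree $2n-2$. Since $2n-2 \equiv 2 \pmod 4$ for $n$ even, the stable Farrell--Hsiang pattern would predict $0$; the genuinely unstable class is precisely what makes the range $k \leq 2n-3$ of Remark \ref{rem:close-to-threshold} sharp, and it will be the candidate kernel element.

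For the vanishing, observe that $\mu_{U_g^n}$ factors as
\[
B\Diff_\partial(D^{2n+1}) \to B\Diff_\partial(U_{g,1}^n) \to B\Diff(U_g^n)
\]
by placing the embedded disc inside $U_{g,1}^n \subset U_g^n = U_{g,1}^n \cup D^{2n+1}$ and extending by the identity at each stage. It therefore suffices to show that $\pi_{2n-2}(B\Diff_\partial(U_{g,1}^n)) \otimes \bQ = 0$ for $g$ large. I would apply the Botvinnik--Perlmutter theorem: for an appropriate $(2n+1)$-dimensional tangential structure $\theta$, the canonical map $B\Diff_\partial(U_{g,1}^n) \to \Omega^\infty_0 \mathbf{MT}\theta$ is a rational homology equivalence in a stable range $* \leq c(g)$ with $c(g)\to\infty$. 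Since the target is a connected infinite loop space, its rational homotopy in degree $k$ agrees with $H_k(\mathbf{MT}\theta;\bQ)$; via the Thom isomorphism this becomes (up to shift) $H^{k+2n+1}(BSO(2n+1);\bQ)$. For $k = 2n-2$ the relevant degree is $4n-1$, which is not divisible by $4$, so $H^{4n-1}(BSO(2n+1);\bQ) = 0$, giving $\pi_{2n-2}(\Omega^\infty_0 \mathbf{MT}\theta) \otimes \bQ = 0$.

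The principal obstacle is to upgrade the Botvinnik--Perlmutter rational homology statement to a rational homotopy statement on $B\Diff_\partial(U_{g,1}^n)$ itself: this space is not manifestly nilpotent, so one cannot directly transfer the vanishing of rational homology to vanishing of rational homotopy in positive degrees. The standard remedy is to apply Quillen's plus-construction to the source (preserving homology while taming $\pi_1$) and to exploit the nilpotence of the target infinite loop space, so that Malcev completion identifies higher rational homotopy groups in the stable range. Once this is carried out, $\pi_{2n-2}(B\Diff_\partial(U_{g,1}^n)) \otimes \bQ = 0$ for $g$ sufficiently large, whence $\alpha$ maps to $0$ under $(\mu_{U_g^n})_*$, exhibiting a nonzero element of the kernel and proving the theorem.
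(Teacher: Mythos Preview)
Your argument has two genuine gaps.

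First, the Botvinnik--Perlmutter theorem is not a statement about $B\Diff_\partial(U_{g,1}^n)$ at all. It concerns the handlebodies $V_g^n = \natural^g(S^n\times D^{n+1})$ and the group $\Diff_D(V_g^n)$, and the stable target is $Q(BO(2n+1)\langle n\rangle_+)$, not an $\Omega^\infty\mathbf{MT}\theta$. There is no odd-dimensional Madsen--Weiss type theorem of the form you invoke for $B\Diff_\partial(U_{g,1}^n)$, so the computation of the stable rational homotopy you carry out (via a Thom isomorphism landing in $H^{4n-1}(BSO(2n+1);\bQ)$) is not supported.

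Second, even if such a rational homology identification were available, your passage to rational homotopy does not work. The plus construction preserves homology but \emph{changes} higher homotopy groups: the map $X\to X^+$ has acyclic but generally not contractible homotopy fibre, so one cannot deduce $\pi_{2n-2}(X)\otimes\bQ=0$ from $\pi_{2n-2}(X^+)\otimes\bQ=0$. The whole point of the paper's Theorem~\ref{maintheoremhomology} versus Theorem~\ref{mainthm:homotopy} is that, for manifolds containing many copies of $U_{1,1}^n$, the map $\mu_M$ is trivial on rational \emph{homology} in the stable range while remaining injective on rational \emph{homotopy}; your proposed mechanism would erase that distinction.

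The paper's proof proceeds entirely differently. It takes $M=\mathbb{RP}^2\times S^{2n-1}$, observes that $\mu_M$ factors through the homotopy fibre of $B\Diff(M)\to B\Homeo(M)$ because $B\Homeo_\partial(D^{2n+1})\simeq *$, identifies this fibre via smoothing theory with a section space of a bundle with fibre $\Top(2n+1)/O(2n+1)$, and then runs the Federer spectral sequence. The vanishing of $\pi_{2n-2}$ of the section space tensored with $\bQ$ comes down to a computation of $H^{2-i}(\mathbb{RP}^2;\pi_{4n-1-i}(\Top(2n+1)/O(2n+1))\otimes\bQ)$ for $i=0,1,2$, where the $C_2$-action on the coefficients is determined by \cite[Corollary 8.4]{KrannRW}. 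The non-simply-connected base and the sign pattern of the involution for even $n$ are what make this work.
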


\subsection*{Acknowledgements}

I am grateful to Mauricio Bustamante for stating Theorem \ref{mainthm:homotopy} as a fact to me, to Markus Land and Oscar Randal--Williams for some remarks, in particular concerning the proof of Theorem \ref{cor:frombotperl} and to Manuel Krannich for his detailed feedback on the entire paper. 

I am particularly indebted to the anonymous referee for his/her exceptionally speedy and thorough report. In particular, he/she suggested to improve the main results by using \cite{KrannRW}, and Theorem \ref{thm:optimality} together with its proof.

I was supported by the Deutsche Forschungsgemeinschaft (DFG, German Research Foundation) -- Project-ID 427320536 -- SFB 1442, as well as under Germany’s Excellence Strategy EXC 2044 -- 390685587, Mathematics M\"unster: Dynamics–Geometry–Structure.

\section{Proof of the homotopical theorem}

The proof of Theorem \ref{mainthm:homotopy} relies on higher torsion invariants as axiomatized by Igusa \cite{IgusaTorsion}, and we review some background beforehand.
Let $K(\bZ)$ be the algebraic $K$-theory spectrum of $\bZ$ and let 
\[
u: Q(S^0) \to \Omega^\infty K(\bZ)
\]
be the unit map on infinite loop spaces. 

Consider a fibration $\pi:E \to B$ with fibre $M$, a finite CW complex of dimension $d$. Let 
\begin{equation}\label{eqn:monodromy}
\rho_k(\pi) : B \to B \GL (H_k (M;\bZ))
\end{equation}
be the map induced by the monodromy action of the fundamental group on the homology of the fibre. The $\bZ$-module $H_k (M;\bZ)$ is finitely generated, and hence there is a canonical map 
\[
\iota: B \GL(H_k (M;\bZ))\to \Omega^\infty K(\bZ);
\]
such a map exists even if the homology groups are not free, essentially because $\bZ$ is a regular ring (each finitely generated $\bZ$-module has a finite length resolution by projective finitely generated $\bZ$-modules). See the discussion leading up to \cite[Proposition 6.7]{DWW} for details. 

The map $\iota$ hits the component of $[H_k (M;\bZ)] \in K_0 (\bZ)= \pi_0 (\Omega^\infty K(\bZ))\cong \bZ$. 
The algebraic $K$-theory Euler characteristic of the fibration $\pi$ is the alternating sum 
\[
\chi(\pi):= \sum_{k=0}^d (-1)^k \iota \circ \rho_k(\pi): B \to \Omega^\infty K(\bZ),
\]
where $d=\dim (M)$ and we have used the $H$-space structure on $\Omega^\infty K(\bZ)$ to form the sum. Of course, $\chi(\pi)$ hits the component indexed by $ \chi(M) \in \bZ=K_0 (\bZ)$. 

The fibration $\pi$ has an associated transfer map \cite{BeckGott2}
\[
\trf_\pi: \Sigma^\infty B_+ \to \Sigma^\infty E_+
\]
on the level of suspension spectra; we mostly consider its adjoint, also denoted
\[
\trf_\pi: B \to Q(E_+). 
\]

The Dwyer--Weiss--Williams index theorem \cite[Corollary 8.12]{DWW} implies that if $\pi$ is a \emph{smooth} fibre bundle, the diagram 
\begin{equation}\label{eq:dwwdiagram}
\xymatrix{
B \ar[dr]_{\chi(\pi)} \ar[r]^-{\trf_\pi} & Q(E_+) \ar[d]^{u\circ c}\\
    & \Omega^\infty K(\bZ)
}
\end{equation}
commutes up to a preferred homotopy (here $u\circ c$ is the composition of the unit map with the collapse map $c: Q(E_+) \to Q(S^0)$). 

\begin{rem}
Actually, \cite[Theorem 8.5]{DWW} proves a stronger version involving the algebraic $K$-theory $A(E)$ of the space $E$ and a fibrewise version thereof. Raptis and Steimle gave a substantially simpler proof of the homotopy-commutativity of \eqref{eq:dwwdiagram} in \cite{RS1}; they also showed \cite[Theorem 8.5]{DWW} for smooth bundles in \cite{RS2}. 
\end{rem}

The diagram \eqref{eq:dwwdiagram} can be used to define secondary invariants under additional hypotheses on the bundle $\pi$; we follow the approach of \cite{BadziochDorabialaWilliams} here, with some modifications. The extra hypothesis to be made is that the monodromy action of $\pi_1(B)$ on $H_k (M;\bZ)$ is unipotent for all $k$ (in \cite{BadziochDorabialaWilliams}, the authors consider homology with coefficients in a field, but the construction generalizes to regular rings such as $\bZ$, see \cite[Remark 6.11]{BadziochDorabialaWilliams}). Moreover, we assume as in \cite{BadziochDorabialaWilliams} that the base space $B$ is a compact manifold, possibly with boundary. 

Under these assumptions, the map $\chi(\pi)$ comes with a preferred homotopy to the constant map to the point 
\[
 \chi(M):=\sum_{k=0}^d (-1)^k [H_k (M;\bZ)] \in \Omega^\infty K(\bZ),
\]
see \cite[Theorem 6.7]{BadziochDorabialaWilliams}. Combining this homotopy with the preferred homotopy from \eqref{eq:dwwdiagram} yields a map 
\begin{equation}\label{eqn:definitiontaumap}
T (\pi): B \to \hofib_{\chi(M)} (u) \simeq \hofib_0 (u)
\end{equation}
(use the infinite loop space structures to identify the homotopy fibres). 
Using Borel's computation \cite{Borel} of $\pi_* (\Omega^\infty K(\bZ)) \otimes \bR$, one can define characteristic classes of unipotent smooth bundles as follows. Firstly 
\begin{equation}\label{eqn:borels-calculation0}
H^* (\hofib_0 (u);\bR)= \bR [a_4, a_8, \ldots]
\end{equation}
for certain generators $a_{4k}$ of degree $4k$ (the transgression of $a_k$ is the Borel class in $ H^{4k+1}(\Omega^\infty K(\bZ);\bR)$). 
Following \cite[\S 7]{BadziochDorabialaWilliams} (but using the notation of \cite{BDKW}), one defines
\begin{equation}\label{eq:torsionclasscohomology}
t_{4k}^s(\pi):= T(\pi)^* a_{4k} \in H^{4k}(B;\bR).
\end{equation}
It is convenient for us to replace the coefficient field by $\bQ$, which can be done as follows. Firstly, the Borel class comes from a spectrum cohomology class $b_k \in H^{4k+1} (K(\bZ);\bR)$. Secondly, Borel showed that $H^{4k+1} (K(\bZ);\bR)$ is $1$-dimensional, and so there is $\alpha_k \in \bR^\times$ such that $\alpha_k b_k$ lies in $H^* (K(\bZ);\bQ)$. We now define 
\begin{equation}\label{eq:torsionclasscohomology2}
\overline{t}_{4k}^s(\pi)=\overline{t}_{4k}^s(E):= \alpha_k t_{4k}^s(\pi) \in H^{4k}(B;\bQ). 
\end{equation}
The construction of \eqref{eqn:definitiontaumap} is given in \cite{BadziochDorabialaWilliams} only for compact manifold bases; the definition of \eqref{eq:torsionclasscohomology2} can be generalized to arbitrary base spaces as follows. For an arbitrary unipotent bundle $E \to B$, we define $\overline{t}_{4k}^s (E) \in H^{4k}(B;\bQ)$ as the class corresponding to the homomorphism 
\begin{equation}\label{eqn:framedbordismhom}
\Omega_{4k}^{\mathrm{fr}}(B) \to \bQ; \; [X,f] \mapsto \langle \overline{t}_{4k}^s (f^* E);[X] \rangle 
\end{equation}
from the framed bordism group of $B$ under the isomorphism $\Hom (\Omega^{\mathrm{fr}}_{4k}(B);\bQ) \cong H^{4k}(B;\bQ)$ (one needs \cite[Proposition 7.3]{BadziochDorabialaWilliams} to show that \eqref{eqn:framedbordismhom} is well-defined).

Now let $\Torr (M) \subset \Diff (M)$ be the \emph{Torelli diffeomorphism group}, i.e. the subgroup of those diffeomorphisms which act as the identity on $H_* (M;\bZ)$ and $H_* (M,\partial M;\bZ)$. The universal $M$-bundle over $B  \Torr (M)$ is clearly unipotent, and combining the classes $\overline{t}_{4k}^s$, we obtain a map 
\begin{equation}\label{eqn:torsion-torellispace}
\tau_M : B \Torr (M) \to \prod_{k \geq 1} K(\bQ,4k). 
\end{equation}
Observe that $\Torr (D^{2n+1})=\Diff^+ (D^{2n+1})$ is the group of orientation-preserving diffeomorphisms, we obtain in particular
\begin{equation}\label{smoothtoriosndisc}
\tau_{D^{2n+1}}: B \Diff_\partial(D^{2n+1})  \to B \Diff^+ (D^{2n+1}) = B \Torr (D^{2n+1}) \to \prod_{k \geq 1} K(\bQ,4k)
\end{equation}

Farrell-Hsiang's theorem might be restated as follows.

\begin{thm}\label{thm:farrellhsiang}
The map $\tau_{D^{2n+1}}$ induces an isomorphism on rational homotopy groups in degrees at most $\phi^\bQ(D^{2n})$.
\end{thm}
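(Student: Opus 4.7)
My plan is to reduce the statement to the abstract Farrell--Hsiang calculation of the rational homotopy groups and then to identify the torsion map $\tau_{D^{2n+1}}$ with the characteristic-class map implicit in that calculation.

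First, by Farrell--Hsiang \eqref{eqn:farrellhsiang} the source of $\tau_{D^{2n+1}}$ has rational homotopy $\bQ$ in positive degrees divisible by $4$ and $0$ in other positive degrees throughout the stable range, and the target $\prod_{k \geq 1} K(\bQ,4k)$ has the same rational homotopy groups by construction. It therefore suffices to show that $(\tau_{D^{2n+1}})_*$ is surjective (equivalently, bijective) on $\pi_{4k} \otimes \bQ$ for each positive $k$ with $4k \leq \phi^\bQ(D^{2n})$. This in turn reduces to exhibiting, for each such $k$, a smooth disc bundle $\pi \colon E \to S^{4k}$ on which the higher torsion class $\overline{t}^s_{4k}(\pi) \in H^{4k}(S^{4k};\bQ)$ is nonzero.

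Next, I would trace the construction of $T(\pi)$ from \eqref{eqn:definitiontaumap} through Waldhausen's framework. In the concordance stable range, $B\Diff_\partial(D^{2n+1})$ is rationally equivalent (via the suspension/scanning fibration and smoothing theory) to an infinite-loop space built from the smooth Whitehead spectrum $\mathrm{Wh}^{\mathrm{Diff}}(*)$, and the Waldhausen splitting $A(*) \simeq Q(S^0) \times \mathrm{Wh}^{\mathrm{Diff}}(*)$ together with the linearization map $A(*) \to K(\bZ)$ realizes the Borel generators of $\pi_*(\hofib_0(u)) \otimes \bQ$. This is the engine of Farrell--Hsiang's original argument. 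The Dwyer--Weiss--Williams index theorem, together with the unipotent null-homotopy of $\chi(\pi)$ used to manufacture $T$, is set up precisely so that $T^* a_{4k}$ recovers these Borel-type classes up to the rescaling by $\alpha_k$.

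The main obstacle is this last identification: the map $T$ of \eqref{eqn:definitiontaumap} is homotopy-theoretic in origin (DWW plus unipotent trivialization), while the Farrell--Hsiang map is algebraic-$K$-theoretic and emerges from Waldhausen's splitting. Matching them is essentially the content of Igusa's axiomatic characterization of smooth higher torsion; I would invoke \cite{IgusaTorsion} to conclude that on each Farrell--Hsiang generator the torsion class $\overline{t}^s_{4k}$ is a nonzero rational multiple of the canonical Borel pairing, whence $\tau_{D^{2n+1}}$ is a rational isomorphism on $\pi_{4k}$ throughout the stable range.
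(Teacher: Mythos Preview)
Your strategy is broadly the same as the paper's: both proofs observe that source and target have abstractly isomorphic rational homotopy (1-dimensional in each degree divisible by $4$) and then argue that the smooth torsion classes detect the generators. The difference lies in how that detection is carried out and in what is taken as input.

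The paper does not take \eqref{eqn:farrellhsiang} as a black box. Instead it factors $\tau_{D^{2n+1}}$ explicitly as
\[
B\Diff_\partial(D^{2n+1}) \to BC(D^{2n}) \to B\cC(D^{2n}) \to \prod_{k\geq 1} K(\bQ,4k)
\]
and shows each map is a rational isomorphism in the range. The middle map is handled by the definition of $\phi^\bQ$, the last by the stable $h$-cobordism theorem, linearization, and Borel (exactly your second paragraph), and the first by an involution/eigenspace argument on the fibration $\Diff_\partial(D^{2n+1}) \to C(D^{2n}) \to \Diff_\partial(D^{2n})$. The crucial nontriviality of the last map is obtained not from Igusa's axioms but from Igusa's explicit computation \cite[Theorem~6.4.2]{IgusaBook} that the higher Franz--Reidemeister torsion is nonzero on specific concordance classes for large dimension, combined with the main theorem of \cite{BDKW} identifying $\overline{t}^s_{4k}$ with a nonzero multiple of $t^{FR}_{4k}$, and the product formula $\overline{t}^s_{4k}(E \times D^m)=\overline{t}^s_{4k}(E)$ to descend from large $m$.

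Your appeal to \cite{IgusaTorsion} at the key step is not quite sufficient on its own: the axiomatic characterization says that any two higher torsion theories are proportional, but it does not by itself furnish a disc bundle over $S^{4k}$ with nonzero torsion for the given $n$. You still need an explicit nontriviality input (Hatcher's bundles and Igusa's calculation for large dimension) together with the stabilization/product argument to transport it into the range $4k \leq \phi^\bQ(D^{2n})$. Once you add those ingredients, your outline becomes essentially the paper's proof with \eqref{eqn:farrellhsiang} front-loaded rather than re-derived.
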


\begin{proof}
The map $\tau_{D^{2n+1}}$ factors through $B \Diff_\partial (D^{2n+1}) \to BC(D^{2n}) \to B \Diff^+ (D^{2n+1})$. Consider the diagram 
\begin{equation}\label{eqn:productdiagram}
\xymatrix{
B C(D^{2n}) \ar[d]^{\sigma_+^{\circ m}} \ar[r] & B \Diff^+ (D^{2n+1}) \ar[d]^{\times \id_{D^{m}}} \\
 B C(D^{2n+m}) \ar[r] & B \Diff^+ (D^{2n+m+1});
}
\end{equation}
the left vertical map is a composition of the suspension map and the right vertical map is given by taking products with $D^m$ and an identification $D^{2n+m+1}=D^{2n+1} \times D^m$. The square commutes up to homotopy by the definition of the suspension map $\sigma_+$.
A special case of \cite[Theorem 7.1]{BDKW} states that for each unipotent bundle $E \to B$, $\overline{t}_{4k}^s (E \times D^{m})=\overline{t}_{4k}^s (E)$. It follows that $\tau_{D^{2n+1}}: B \Diff_\partial (D^{2n+1}) \to \prod_{k \geq 1} K(\bQ,4k)$ factors as 
\begin{equation}\label{eqn:factoring-torsionmap}
B \Diff_\partial (D^{2n+1}) \to BC(D^{2n}) \to B \cC(D^{2n}):= \hocolim_m B C(D^{2n+m}) \to \prod_{m \geq 1} K(\bQ,4k).
\end{equation}
All three maps in \eqref{eqn:factoring-torsionmap} induce isomorphisms on rational homotopy up to degree $\phi^\bQ(D^{2n})$. This is true for the second map by definition. 

The third map is a rational equivalence. Since the Whitehead group of $\pi_1 (D^{2n})$ is trivial, $B \cC(D^{2n})$ is the stable $h$-cobordism space $\mathcal{H}(D^{2n})$. The stable $h$-cobordism theorem \cite{WJR} states an equivalence $\mathcal{H}(D^{2n}) \simeq \hofib (Q(S^0) \to A(*))$, and the linearization map from $A(*)$ to $\Omega^\infty K(\bZ)$ induces a rational equivalence 
\[
\hofib (Q(S^0) \to A(*)) \to \hofib (Q(S^0) \to \Omega^\infty K(\bZ)). 
\]
Together with \cite{Borel}, this shows that the rational homotopy groups of $B \cC(D^{2n})$ and of $\prod_{m \geq 1} K(\bQ,4k)$ are abstractly isomorphic and at most $1$-dimensional. 

To conclude that the third map in \eqref{eqn:factoring-torsionmap} is a rational isomorphism, it is therefore enough to prove that the induced map on rational homotopy groups is nontrivial whenever its target is nonzero, and this amounts to proving that for each $k \geq 1$, there is $m $ and an element in $\pi_{4k} (BC(D^{2n+m}))$ such that $\overline{t}_{D^{2n+m+1}}^s$ is nontrivial on that element. 
This was done by Igusa in \cite[Theorem 6.4.2]{IgusaBook}, but with the \emph{higher Franz--Reidemeister torsion} classes $t_{4k}^{FR} \in H^{4k}(BC(D^{2n+m});\bR)$ in place of $\overline{t}_{4k}^s$. These were constructed using ideas from Morse theory in \cite[\S 5.7.2]{IgusaBook} (for bundles with structure group $\Torr (M)$) and in \cite[\S 2.11]{IgusaFraming} for unipotent bundles. The main theorem of \cite{BDKW} shows that there is a universal constant $\lambda_{4k} \in \bR^\times$ such that $\overline{t}_{4k}^s (\pi)= \lambda_{4k}t_{4k}^{IK} (\pi)$ for all unipotent bundles over compact manifold bases, and so the third map in \eqref{eqn:factoring-torsionmap} is a rational equivalence. 

It is shown in \cite[\S 6.5]{IgusaBook} that the first map in \eqref{eqn:factoring-torsionmap} induces an isomorphism on rational homotopy groups up to degree $\phi^\bQ(D^{2n})$. In loc.cit., the result is stated in terms of \eqref{eqn:igusarange}, so we give a few more details here. One looks at the fibre sequence
\[
\Diff_\partial (D^{2n+1}) \to C(D^{2n}) \to  \Diff_\partial (D^{2n}).
\]
The two maps are compatible with the following involutions on the spaces: the group inversion on $\Diff_\partial (D^{2n})$, an involution defined at the beginning of \cite[\S 6.5]{IgusaBook} on $C(D^{2n})$, and the involution 
\begin{equation}\label{eqn:involutionondiscdiff}
I: \Diff_\partial (D^{2n+1}) \to \Diff_\partial (D^{2n+1})
\end{equation}
given by conjugation with the reflection map 
\[
r(x_1, \ldots, x_{2n+1}) := (x_1, \ldots, x_{2n}, -x_{2n+1}). 
\]
The rational homotopy sequence of the fibration splits into negative and positive eigenspaces of these involutions. An Eckmann-Hilton argument proves that $\pi^+_*(\Diff_\partial (D^{2n+1}))\otimes \bQ \to \pi^+_*(C(D^{2n})) \otimes \bQ$ is an isomorphism in all degrees. On the other hand, $I_*=\id$ on $\pi_k (\_)\otimes \bQ$ for $k \leq \phi^\bQ(D^{2n})$; Corollary 6.5.3 of \cite{IgusaBook} states this when $k$ is in the range given by \eqref{eqn:igusarange}, but the proof clearly works for $k \leq \phi^\bQ(D^{2n})$. Hence for $k \leq \phi^\bQ(D^{2n})+1$
\[
\pi_k (B \Diff_\partial (D^{2n+1}) )\otimes \bQ = \pi_k^+ (B \Diff_\partial (D^{2n+1}) )\otimes \bQ \cong \pi_k^+ (BC(D^{2n}))\otimes \bQ. 
\]
Finally, $\pi_k^-(BC(D^{2n}))\otimes \bQ=0$ for $k \leq \phi^\bQ(D^{2n})$. To see this, observe that the stabilization map $BC(D^d))\to BC(D^{d+1})$ switches the eigenspaces of the involutions by \cite[Lemma 6.5.1]{IgusaBook}. Hence it is enough to check that $\pi_k^-(BC(D^{2n}))\otimes \bQ=0$ for very large $n$, and this follows from Theorem 6.4.2 and Lemma 6.5.4 of \cite{IgusaBook}, using that the third map in \eqref{eqn:factoring-torsionmap} is a rational equivalence.
\end{proof}

\begin{proof}[Proof of Theorem \ref{mainthm:homotopy} for closed $M$]
We first consider the case where $M$ is closed. The cohomology classes $\overline{t}_{4k}^s$ have the following additivity property: for unipotent bundles $\pi_j: E_j \to B$ with common (vertical) boundary bundle $E_{01}=\partial E_0=\partial E_1$, the boundary bundle $\pi_{01}:E_{01} \to B$, as well as the glued bundle $\pi: E =E_0 \cup_{\partial_{E_j}} E_1 \to B$ are also unipotent, and we have 
\begin{equation}\label{additivitysmoothtorsion}
\overline{t}_{4k}^s  (\pi)+\overline{t}_{4k}^s  (\pi_{01})= \overline{t}_{4k}^s  (\pi_1)+ \overline{t}_{4k}^s  (\pi_2)\in H^{4k}(B;\bQ).
\end{equation}
This is proven for $B$ a compact manifold in \cite[Corollary 5.2]{BDKW}, the case of a general base follows by using framed cobordism as in the construction of $\overline{t}_{4k}^s$ for general base spaces. 

Next, the map $\mu_M: B \Diff_\partial (D^{2n+1}) \to B \Diff (M)$ lifts to $\tilde{\mu}_M: B \Diff_\partial (D^{2n+1}) \to B \Torr (M)$, and \eqref{additivitysmoothtorsion} shows that 
\begin{equation}\label{eqn:tau-mu-homotopy}
\tau_M  \circ \tilde{\mu}_M \sim \tau_{D^{2n+1}}. 
\end{equation}
By Theorem \ref{thm:farrellhsiang}, it follows that $\tilde{\mu}_M$ is injective on $\pi_k (\_)\otimes \bQ$ when $k \leq \phi^\bQ(D^{2n})$. 
Because $\Torr (M) \subset \Diff (M)$ is a union of path-components, $p_*:\pi_k (B \Torr (M)) \to \pi_k (B \Diff (M))$ is injective when $k=1$ and an isomorphism when $k \geq 2$, and $\mu_M = p \circ \tilde{\mu}_M$ is injective on rational homotopy groups up to degree $\phi^\bQ(D^{2n})$. 
\end{proof}

We have used that $M$ is closed in order to use \eqref{additivitysmoothtorsion} which in the quoted source is only covered for closed $M$. The case of a general $M$ reduces to the closed case by ``doubling''. Let $M$ be a manifold with boundary, let $A \subset \partial M$ be a compact codimension $0$ submanifold and form $M \cup_{\partial M - \inter{A}} M$. This is a manifold with boundary $A\cup_{\partial A} A$. 
Let $\Diff_A(M)$ be the group of diffeomorphisms which fix $A$ pointwise. There is a doubling map 
\[
d_{A}: B \Diff_A (M) \to B \Diff_\partial (M \cup_{\partial M - \inter{A}} M)
\]
given by extending a diffeomorphism with its reflection. The diagram
\begin{equation}\label{eqn:doublingdiagram}
\xymatrix{
B \Diff_\partial (D^{2n+1}) \ar[rr]^{\mu_M} \ar[d]^{d_{D^{2n}}} & & B \Diff_A (M) \ar[d]^{d_A} \\
B \Diff_\partial (D^{2n+1}\cup_{D^{2n}} D^{2n+1}) \ar[rr]^-{\mu_{M \cup_{\partial M - \inter{A}} M}} & & B \Diff_\partial (M \cup_{\partial M - \inter{A}} M)
}
\end{equation}
commutes up to homotopy, where $D^{2n} \subset \partial D^{2n+1}$ denotes a half-disc in the boundary.

\begin{lem}\label{lem:doubling-map-disc}
The doubling map induces an isomorphism 
\[
(d_{D^{2n}})_*: \pi_k (B \Diff_\partial (D^{2n+1}))\otimes \bQ \to \pi_k (B \Diff_\partial (D^{2n+1}\cup_{D^{2n}} D^{2n+1}))\otimes \bQ
\]
when $k \leq \phi^\bQ(D^{2n})+1$. 
\end{lem}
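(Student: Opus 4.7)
The plan is to reduce $(d_{D^{2n}})_*$, in the stated range, to twice a rational isomorphism, by exploiting the H-space structure of $\Diff_\partial (D^{2n+1})$ together with the fact that the reflection involution $I_*$ from \eqref{eqn:involutionondiscdiff} acts as the identity on $\pi_k (B\Diff_\partial (D^{2n+1}))\otimes \bQ$ for $k \leq \phi^\bQ(D^{2n})+1$, as was already used in the proof of Theorem~\ref{thm:farrellhsiang}.

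First, identify the double $D^{2n+1} \cup_{D^{2n}} D^{2n+1}$ with $D^{2n+1}$, so that it becomes the union of two half-ball sub-discs $H_1, H_2$ interchanged by a reflection $R$. Pick embeddings $\iota_j : D^{2n+1} \to D^{2n+1}$ with image $H_j$ and with $\iota_2 = R \circ \iota_1$; extension by identity gives group homomorphisms $\iota_{j*} : \Diff_\partial (D^{2n+1}) \to \Diff_\partial (D^{2n+1})$, and directly from the definition of the doubling map,
\[
d_{D^{2n}}(\varphi) = \iota_{1*}(\varphi) \cdot \iota_{2*}(\varphi),
\]
where the two factors commute in $\Diff_\partial (D^{2n+1})$ because their interior supports are disjoint. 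In particular $d_{D^{2n}}$ is itself a group homomorphism.

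The second step is to pass to rational homotopy. Since, for $k \geq 1$, the group operation on $\pi_k (B \Diff_\partial (D^{2n+1}))$ agrees with the one induced by multiplication in $\Diff_\partial (D^{2n+1})$, the factorisation above yields $(d_{D^{2n}})_* = (\iota_1)_* + (\iota_2)_*$. Since $\iota_{2*}(\varphi) = R\, \iota_{1*}(\varphi)\, R^{-1}$, one has $(\iota_2)_* = I_* \circ (\iota_1)_*$ on rational homotopy, and hence
\[
(d_{D^{2n}})_* = (1 + I_*) \circ (\iota_1)_* = 2 \cdot (\iota_1)_*
\]
in the claimed range. Finally $(\iota_1)_*$ is itself an isomorphism because $\iota_1$ is isotopic through smooth embeddings (for which extension by identity is defined) to a self-diffeomorphism of $D^{2n+1}$ obtained by enlarging the half-ball until it fills the disc; conjugation by such a self-diffeomorphism is an automorphism of $\Diff_\partial (D^{2n+1})$. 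Multiplication by $2$ is an isomorphism of $\bQ$-vector spaces, so $(d_{D^{2n}})_*$ is too.

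The only point requiring a moment's attention is that the doubling reflection $R$ induces the same action on $\pi_* (B\Diff_\partial (D^{2n+1}))\otimes \bQ$ as the involution $I$ from \eqref{eqn:involutionondiscdiff}. This follows because any two orientation-reversing diffeomorphisms of $D^{2n+1}$ lie in a single path-component of $\Diff (D^{2n+1})$, so their conjugation actions on $\Diff_\partial (D^{2n+1})$ are homotopic and induce the same map on rational homotopy. This is bookkeeping rather than a real obstacle; the heart of the argument is the identity $(d_{D^{2n}})_* = 2 (\iota_1)_*$ above.
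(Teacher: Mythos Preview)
Your argument is correct and is essentially the paper's proof, only spelled out in more detail: the paper invokes the Eckmann--Hilton argument to identify $(d_{D^{2n}})_*$ with $1+(BI)_*$ after the identification $D^{2n+1}\cup_{D^{2n}}D^{2n+1}\cong D^{2n+1}$, and then uses that $(BI)_*=\id$ on $\pi_k(\_)\otimes\bQ$ for $k\leq \phi^\bQ(D^{2n})+1$. Your extra factor $(\iota_1)_*$ is indeed the identity (the shrinking isotopy you describe can be taken to end at the identity embedding, not merely at some diffeomorphism), so your formula and the paper's agree on the nose; likewise your reflection $R$ can be taken to be literally the map $r$ of \eqref{eqn:involutionondiscdiff}, so no path-component argument is needed, though yours is also fine.
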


\begin{proof}
By the Eckmann-Hilton argument, the effect of $d$ on rational homotopy groups is the map
\[
1+(BI)_*: \pi_* (B \Diff_\partial (D^{2n+1})) \otimes \bQ\to \pi_* (B \Diff_\partial (D^{2n+1}))\otimes \bQ
\]
(here $I$ is the involution \eqref{eqn:involutionondiscdiff}, and we identified $D^{2n+1}\cup_{D^{2n}} D^{2n+1}=D^{2n+1}$). The lemma now follows from the fact that $BI_*=\id$ on $\pi_k (\_)\otimes \bQ$ for $k \leq \phi^\bQ(D^{2n})+1$ (see the proof of Theorem \ref{thm:farrellhsiang} above for more details). 
\end{proof}

\begin{rem}
The bound given in Lemma \ref{lem:doubling-map-disc} is optimal: \cite[Corollary 8.4]{KrannRW} shows that the involution acts nontrivially on $\pi_{2n-2}(B \Diff_\partial (D^{2n+1})) \otimes \bQ$ when $n  \geq 5$, while $\phi^\bQ(D^{2n})+1 = 2n-3$ for such $n$.
\end{rem}

\begin{proof}[Proof of Theorem \ref{mainthm:homotopy}, general case]
To prove the statement for $\mu_M$, use \eqref{eqn:doublingdiagram} with $A=\emptyset$ and apply Lemma \ref{lem:doubling-map-disc}. The statement for $\mu_M^\partial$ follows from that for $\mu_M$ in view of the definition of $\mu_M$. 
\end{proof}

\begin{rem}
From the proof of Theorem \ref{mainthm:homotopy} given above, one can also deduce a statement about $H_*(B \Torr (M);\bQ)$, namely that $\tilde{\mu}_M$ is injective on rational homology in the concordance stable range, at least when $M$ is closed or orientable. 

In the case where $M$ is closed, this follows from \eqref{eqn:tau-mu-homotopy} and Theorem \ref{thm:farrellhsiang}. For manifolds with nonempty boundary, a bit more care is needed to check that doubling really gives a map $\Torr (M) \to \Torr (M \cup_{\partial M}M )$. For oriented $M$, the argument goes as follows. 

Mapping both halves of $M \cup_{\partial M} M $ to $M$ and excision in homology gives a $\Diff(M)$-equivariant isomorphism $H_* (M \cup_{\partial M} M;\bZ) \cong H_* (M;\bZ) \oplus H_* (M,\partial M;\bZ)$, from which it follows that the double of $f$ also induces the identity on homology. 

We leave it to the reader to figure out statements in cohomology or a variant for nonorientable $M$.
\end{rem}

\section{Proof of the homological theorem}

We now turn to the proof of Theorem \ref{maintheoremhomology}, which relies on work by Botvinnik and Perlmutter \cite{PerlStab}, \cite{BotPerl}. To state their results, let 
\[
V_g^n := \natural^g (S^n \times D^{n+1})
\]
be the boundary connected sum of $g$ copies of $S^n \times D^{n+1}$, and let $D=D^{2n} \subset \partial V_g^n$ be a disk in the boundary of $V_g^n$. Note that $V_0^n = D^{2n+1}$. There is a stabilization map
\begin{equation}\label{eqn:stabilitypasf}
B \Diff_D (V_g^n) \to B \Diff_D (V_{g+1}^n),
\end{equation}
given by taking boundary connected sum with $S^n \times D^{n+1}$ at $D$ and extending diffeomorphisms by the identity. 
Perlmutter \cite[Theorem 1.1]{PerlStab} proved that the map \eqref{eqn:stabilitypasf} induces an isomorphism in homology in degrees $* \leq\frac{1}{2} (g-4)$, provided that $n \geq 4$. Botvinnik and Perlmutter \cite{BotPerl} computed the homology of $B \Diff_D (V_g^n)$ in the stable range. Let 
\[
\theta_n: BO(2n+1)\langle n \rangle\to BO(2n+1)
\]
be the $n$-connected cover of $BO(2n+1)$. Let $\pi:E \to B$ be a bundle with fibre $V_g^n$ and structure group $\Diff_D (V_g^n)$. The vertical tangent bundle $T_v E$ admits a $\theta_n$-structure, i.e. a bundle map $\ell:T_v E \to \theta_n^* \gamma_{2n+1}$ to the pullback of the universal bundle over $BO(2n+1)$. This $\theta_n$-structure is unique up to contractible choice, once the following condition is imposed. Inside $E$, there is a trivial $D$-subbundle $B \times D$. The restriction of the vertical tangent bundle $T_v E$ to $B \times D$ has a canonical trivialization, and one requires that $\ell$ is compatible with that trivialization (see \cite[Proposition 6.16]{BotPerl} for all this).

Let $\underline{\ell}: E \to BO(2n+1)\langle n \rangle$ be the map of spaces underlying $\ell$. Let
\[
\alpha_\pi: B \stackrel{\trf_\pi}{\to} Q(E_+) \stackrel{Q(\underline{\ell})}{\to} Q(BO(2n+1)\langle n \rangle_+)
\]
be the composition of the transfer with the map induced by $\underline{\ell}$. In particular, we can apply this construction to the universal bundle over $B \Diff_D (V_g^n)$ and obtain a map 
\begin{equation}\label{eqn:bitviinikperlmuttermap}
\alpha_g: B \Diff_D (V_g^n) \to Q_{1+ (-1)^n g} (BO(2n+1)\langle n \rangle_+);
\end{equation}
the target is the path component indexed by 
\[
\chi(V_g^n) = 1+ (-1)^n g \in \bZ = \pi_0 (Q(BO(2n+1)\langle n \rangle_+)). 
\]
The following is essentially \cite[Corollary B]{BotPerl}.
\begin{thm}[Botvinnik, Perlmutter]\label{thm:botperl}
Let $n \geq 4$. Then the map \eqref{eqn:bitviinikperlmuttermap} induces an isomorphism in integral homology in degrees $* \leq \frac{1}{2}(g-4)$.  
\end{thm}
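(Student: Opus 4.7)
The plan is to combine Perlmutter's homological stability theorem \cite[Theorem 1.1]{PerlStab} with the stable homology computation in \cite[Corollary B]{BotPerl}, and then match the map $\alpha_g$ with the parametrized Pontryagin-Thom map of Botvinnik-Perlmutter.

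First I would use Perlmutter's theorem: the stabilization map \eqref{eqn:stabilitypasf} is a homology isomorphism for $* \leq (g-4)/2$. One has to check that the maps $\alpha_g$ are compatible with stabilization up to translation by a fixed stable class. Precisely, the additivity of the Becker-Gottlieb transfer under the decomposition of $V_{g+1}^n$ into $V_g^n$ and $S^n \times D^{n+1}$ along $D^{2n}$, combined with the compatible choice of $\theta_n$-structure recorded in \cite[Proposition 6.16]{BotPerl}, shows that stabilization on the source intertwines with translation by a fixed class $x \in \pi_0 Q(BO(2n+1)\langle n \rangle_+)$ on the target. Since such translation is a homotopy equivalence between components, this reduces the problem to showing that the colimit map
\[
\alpha_\infty: \hocolim_g B \Diff_D(V_g^n) \to Q(BO(2n+1)\langle n \rangle_+)
\]
is a homology isomorphism onto each target component it hits.

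Second, \cite[Corollary B]{BotPerl} identifies the stable integral homology of $B \Diff_D(V_g^n)$ with that of a component of the infinite loop space attached to the tangential structure $\theta_n$ (a Madsen-Tillmann-Weiss-type spectrum built from $\theta_n$) via the parametrized Pontryagin-Thom construction. For an $n$-connected cover in odd fibre dimension there is a natural forgetful map from this spectrum to $\Sigma^\infty BO(2n+1)\langle n \rangle_+$, and composing the BP parametrized Pontryagin-Thom map with the resulting map on infinite loop spaces produces exactly $Q(\underline{\ell}) \circ \trf_\pi = \alpha_\pi$ for each bundle $\pi$; functoriality in $\pi$ then gives $\alpha_\infty$. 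Assuming this comparison, the homology equivalence in \cite[Corollary B]{BotPerl} immediately yields the theorem.

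The main obstacle is this last comparison. The cleanest route is to use the Pontryagin-Thom description of the Becker-Gottlieb transfer: embedding $E$ fibrewise in $B \times \bR^N$ and collapsing realises $\trf_\pi$ as a map into the Thom spectrum of the vertical normal bundle; composing with the Thom class of $\theta_n^* \gamma_{2n+1}$ and using the $\theta_n$-structure on $T_v E$ converts this into $\alpha_\pi$, while the uncollapsed variant is the parametrized Pontryagin-Thom map of \cite{BotPerl}. The comparison then reduces to a formal diagram chase, but one has to keep careful track of components under group completion and of the rigid $\theta_n$-structure along the boundary disc $B \times D$ where the construction is normalised.
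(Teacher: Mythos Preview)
Your overall strategy---compare $\alpha_g$ with Botvinnik--Perlmutter's parametrised Pontrjagin--Thom map---is exactly what the paper does, but your description of the target is confused and you are missing the key technical input. The map in \cite[Corollary~B]{BotPerl} already lands in a component of $Q(BO(2n+1)\langle n\rangle_+)$, not in the infinite loop space of a Madsen--Tillmann spectrum $\mathrm{MT}\theta_n$; there is no additional ``forgetful map'' to compose with. The reason is that $V_g^n$ has nonempty boundary, so the relevant cobordism category is Genauer's category $\Cob_{\theta_n}^\partial$ of $\theta_n$-manifolds with boundary (and cobordisms with corners), and Genauer \cite{Genauer} proved $B\Cob_\theta^\partial \simeq \Omega^{\infty-1}\Sigma^\infty X_+$ directly.

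The paper's argument is then simply that under Genauer's equivalence the tautological map $B \to \Omega B\Cob_{\theta_n}^\partial$ for a bundle $\pi$ agrees with $Q(\underline\ell)\circ\trf_\pi = \alpha_\pi$; this identification comes from the Pontrjagin--Thom description of the transfer, essentially as you sketch in your final paragraph, and is the whole content of the comparison. Once you insert Genauer in place of your hypothetical $\mathrm{MT}\theta_n$ step, the proof is complete. Your detour through Perlmutter's stability theorem and the colimit map $\alpha_\infty$ is also unnecessary: \cite[Corollary~B]{BotPerl} is already a statement for each finite $g$ in the range $*\leq \tfrac{1}{2}(g-4)$.
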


Theorem \ref{thm:botperl} as stated above differs from the formulation given in \cite{BotPerl} in so far as loc.cit. does not mention the transfer at all, so some remarks have to be made here. For a fibration $\theta: X \to BO(d)$, Genauer \cite{Genauer} introduced the cobordism category $\Cob_\theta^\partial$ of $(d-1)$-dimensional $\theta$-manifolds with boundaries and their cobordisms (which have corners). He proved that there is a weak equivalence $B \Cob_\theta^\partial \simeq \Omega^{\infty-1} \Sigma^\infty X_+$, and the equivalence is given by a parametrized Pontrjagin-Thom construction (this result is parallel to the well-known \cite{GMTW} for the usual cobordism category). Given any bundle $\pi:E \to B$ of smooth compact $d$-manifolds with boundary equipped with a $\theta$-structure $\ell$ on the vertical tangent bundle, one obtains a tautological map $B \to \Omega B \Cob_\theta^\partial$, and from the description of the transfer for smooth bundles, one sees that the composition of this tautological map with Genauer's equivalence agrees with the composition $B \stackrel{\trf_\pi}{\to} Q(E_+) \stackrel{Q(\ell_+)}{\to} Q (X_+)$. Using this observation, one derives Theorem \ref{thm:botperl} from the results of \cite{BotPerl}. 

\begin{cor}\label{cor:frombotperl}
If $n \geq 4$, the iterated stabilization map 
\[
B \Diff_D (V_0^n) \to B \Diff_D (V_g^n)
\]
induces the zero map on integral reduced homology, in degrees $* \leq \frac{1}{2} (g-4)$. 
\end{cor}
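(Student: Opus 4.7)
Because $(\alpha_g)_*$ is an integral homology isomorphism in degrees $* \leq \frac{1}{2}(g-4)$ by Theorem \ref{thm:botperl}, the corollary reduces to showing that the composition $\alpha_g \circ s_g : B \Diff_D(V_0^n) \to Q_{1+(-1)^n g}(BO(2n+1)\langle n\rangle_+)$ is zero on reduced homology, where $s_g$ denotes the iterated stabilization. I plan to show the stronger statement that this composition is homotopic to a constant map.

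To that end, I first decompose the pullback $s_g^* E_g$ of the universal $V_g^n$-bundle. Since the stabilization is given by iterated boundary-connected sum with $S^n \times D^{n+1}$ followed by extension of diffeomorphisms by the identity on the newly attached pieces, the pullback splits fiberwise as
\[
s_g^* E_g = E_0 \cup_{B \times D^{2n}} (B \times W), \qquad W := \natural^g (S^n \times D^{n+1}),
\]
where $B := B \Diff_D(V_0^n)$, $E_0 \to B$ is the universal $V_0^n$-bundle, and the $W$- and $D^{2n}$-pieces are trivial bundles. Mayer--Vietoris additivity for the Becker--Gottlieb transfer, post-composed with the ambient $\theta_n$-structure restricted to each piece, then gives the H-space identity
\[
\alpha_g \circ s_g \simeq \alpha_{E_0} + \alpha_{B \times W} - \alpha_{B \times D^{2n}}
\]
in $Q(BO(2n+1)\langle n\rangle_+)$.

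The heart of the argument is that each of the three summands is a constant map. For the two trivial-bundle terms, the induced $\theta_n$-structure factors through the fiber, so the Becker--Gottlieb transfer composed with the structure map stably factors as $\Sigma^\infty B_+ \to S \to \Sigma^\infty BO(2n+1)\langle n\rangle_+$ (the transfer of a product is $\id_{B_+} \wedge \tau_F$, and the second smash factor is $B$-independent); the adjoint of such a composite is a constant map. For $\alpha_{E_0}$, the key point is that $V_0^n = D^{2n+1}$ is contractible, so that both $E_0 \to B$ and the inclusion of the trivial disc subbundle $B \times D \hookrightarrow E_0$ are weak equivalences; since the $\theta_n$-structure $\underline{\ell_0}$ is required to agree with the canonical trivialization along $B \times D$, two-out-of-three forces $\underline{\ell_0}$ to be null-homotopic, and therefore $\alpha_{E_0}$ to be a constant map as well.

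A sum of constant maps into an H-space is constant, so $\alpha_g \circ s_g$ is homotopic to a constant and in particular zero on reduced integral homology; combined with the range provided by Theorem \ref{thm:botperl}, this yields the corollary. The main point requiring care is the Mayer--Vietoris additivity of the Becker--Gottlieb transfer in this setting of manifolds with corners together with the compatibility of the $\theta_n$-structures across the decomposition; everything else should follow from formal manipulations inside the infinite loop space $Q(BO(2n+1)\langle n\rangle_+)$.
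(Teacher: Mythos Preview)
Your argument and the paper's are built on the same idea: use additivity of the Becker--Gottlieb transfer (your Mayer--Vietoris decomposition is equivalent to the paper's commutative square with translation along the bottom) to reduce everything to the map $\alpha_{E_0}=\alpha_0$, and then observe---via the section $B\times D\hookrightarrow E_0$ and contractibility of the disc fibre---that $\underline{\ell_0}$ is null-homotopic.

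There is one under-justified step in your version. From the null-homotopy of $\underline{\ell_0}$ you only get that $\alpha_{E_0}$ factors as $B\stackrel{\trf}{\to}Q(E_{0+})\to Q(S^0)\to Q(BO(2n+1)\langle n\rangle_+)$; the middle map $B\to Q(S^0)$ is \emph{not} constant for formal reasons (factoring a stable map $\Sigma^\infty B_+\to S$ through $S$ is automatic and says nothing). What you need is the identity $\Sigma^\infty\pi_+\circ\trf_\pi\simeq\chi(F)\cdot\id$, which gives $c_{E_0}\circ\trf\simeq\chi(D^{2n+1})\cdot c_B=c_B$, and the adjoint of $c_B$ is indeed the constant map to $1\in Q(S^0)$. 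Once this is said, your route is complete and in fact yields the slightly stronger conclusion that $\alpha_g\circ s_g$ is null-homotopic, not merely zero on reduced homology.

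The paper handles the integral statement differently: it does not claim $\alpha_0$ is null-homotopic, but instead uses that the transfer extends to fibrations with finite CW fibre, so that $\alpha_0$ factors through $B\Homeo_D(D^{2n+1})$, which is contractible by the Alexander trick. This sidesteps the computation above at the cost of invoking the homotopy-invariant transfer and the Alexander trick; your argument avoids both but needs the Becker--Gottlieb identity made explicit.
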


\begin{proof}
The transfer has an additivity property \cite{BeckerSchultz} which implies that 
\[
\xymatrix{
B \Diff_D (V_0^n) \ar[r] \ar[d]^{\alpha_0} & B \Diff_D (V_g^n)\ar[d]^{\alpha_g}\\
Q_{1} (BO(2n+1)\langle n \rangle_+) \ar[r] & Q_{1+ (-1)^n g} (BO(2n+1)\langle n \rangle_+) 
}
\]
commutes up to homotopy; the lower map takes the sum with a fixed point in $Q_{(-1)^n g} (BO(2n+1)\langle n \rangle_+)$ and is a weak equivalence. We shall show that the left vertical map is trivial in reduced homology (in all degrees); this will imply the claim by Theorem \ref{thm:botperl}. 

The map $\alpha_0$ factors as 
\[
B \Diff_D (V_0^n) \stackrel{\trf}{\to} Q_1 ((E \Diff_D (V_0^n)\times_{\Diff_D (V_0^n)} V_0^n)_+) \stackrel{Q(\underline{l})}{\to}  Q_{1} (BO(2n+1)\langle n \rangle_+). 
\]
The map $Q(\underline{l})$ is induced from the vertical tangent bundle of the universal bundle $E \Diff_D (V_0^n)\times_{\Diff_D (V_0^n)} V_0^n \to B \Diff_D (V_0^n)$, which is trivial by the following argument: the choice of a point in $D \subset \partial V_0^n$ determines a section $s$ of the bundle, which is a homotopy equivalence as $V_0^n$ is a disc, and the pullback of the vertical tangent bundle along $s$ is trivial. Hence $Q(\underline{l})$ factors through $Q_1 (S^0)$ which is rationally acyclic. Therefore $\alpha_0$ induces the zero map on rational homology. 
This finishes the proof for rational homology, which is all we need for the proof of Theorem \ref{maintheoremhomology}. 

For the integral version, we use that the transfer is defined more generally for fibrations with finite CW fibres. It follows that there is a commutative diagram 
\[
\xymatrix{
B \Diff_D (V_0^n) \ar[r]^-{\trf} \ar[d] &  Q_1 ((E \Diff_D (V_0^n)\times_{\Diff_D (V_0^n)} V_0^n)_+) \ar[d]\\
B \Homeo_D (V_0^n) \ar[r]^-{\trf}  & Q_1 ((E \Homeo_D (V_0^n)\times_{\Homeo_D (V_0^n)} V_0^n)_+). 
}
\]
Because the map 
\[
\underline{\ell}: E \Diff_D (V_0^n)\times_{\Diff_D (V_0^n)} V_0^n \to B O(2n+1)\langle n \rangle
\]
is nullhomotopic as we just argued, it extends to a map 
\[
\underline{\ell}' : E \Homeo_D (V_0^n)\times_{\Homeo_D (V_0^n)} V_0^n \to B O(2n+1)\langle n \rangle.
\] 
Therefore $\alpha_0$ factors up to homotopy through $B \Homeo_D (V_0^n)$, which is contractible by the Alexander trick. 
\end{proof}

\begin{proof}[Proof of Theorem \ref{maintheoremhomology}]
Since both $\mu_M$ and $\mu_M^\partial$ factor through
\[
\mu_{U_{g,1}^n}^\partial:B \Diff_\partial (D^{2n+1}) \to B \Diff_\partial (U_{g,1}^n),
\]
it suffices to show that $\mu_{U_{g,1}^n}^\partial$ induces the trivial map in rational homology in the indicated range of degrees. Note that 
\[
V_g^n \cup_{\partial V_g^n \setminus \inter{D}} V_g^n = U_{g,1}^n. 
\]

Pick an embedding $f:D^{2n+1} \to V_0^n \subset V_g^n$ which is disjoint from the disc $D \subset V_0^n$ and such that $f^{-1} (\partial V_0^n)= D^{2n}$ is a disc in $\partial D^{2n+1}$. This gives a map $\mu:B \Diff_\partial (D^{2n+1}) \to B\Diff_D (V_0^n)$, and by Corollary \ref{cor:frombotperl}, the composition of that with the stabilization map to $B \Diff_D (V_g^n)$ is trivial in integral homology degrees $\leq \frac{1}{2} (g-4)$. 

Diagram \eqref{eqn:doublingdiagram} becomes 
\begin{equation}\label{eqn:doublingdiagram2}
\xymatrix{
B \Diff_\partial (D^{2n+1}) \ar[r] \ar[d]^{d} & B \Diff_D (V_g^n) \ar[d]^{d} \\
B \Diff_\partial (D^{2n+1}\cup_{D^{2n}} D^{2n+1}) \ar[r]^-{\mu} & B \Diff_\partial (U_{g,1}^n). 
}
\end{equation}
Lemma \ref{lem:doubling-map-disc} shows that the left vertical in \eqref{eqn:doublingdiagram2} induces an isomorphism in rational homotopy in degrees at most $\phi^\bQ(D^{2n})+1$. The same is true in rational homology, since both spaces are connected $H$-spaces and their rational homology is the free graded commutative algebra on the rational homotopy, and so the proof is complete.
\end{proof}

\section{Optimality of the range in the homotopical theorem}

\begin{proof}[Proof of Theorem \ref{thm:optimality}]
The composition $B \Diff_\partial (D^{2n+1}) \stackrel{\mu_M}{\to} B \Diff (M) \to B \Homeo (M)$ factors through $B \Homeo_\partial (D^{2n+1}) \simeq *$. Hence $\mu_M$ factors through the space
\[
\hofib (B \Diff(M) \to B \Homeo(M)). 
\]
By \cite[Theorem A]{KrannRW}, $\pi_{2n-2}(B \Diff_\partial (D^{2n+1})) \otimes \bQ \neq 0$ if $n \geq 6$. Therefore, it is enough to find a closed $(2n+1)$-manifold $M$ such that $\pi_{2n-2}(\hofib (B \Diff(M) \to B \Homeo(M))) \otimes \bQ =0$. 

Now by smoothing theory \cite[Essay V]{KirbSieb}, $\hofib (B \Diff(M) \to B \Homeo(M))$ is homotopy equivalent to a union of path components of the section space $\Gamma (M;\Fr(M)\times_{O(2n+1)} \frac{\Top(2n+1)}{O(2n+1)})$.

We now prove that 
\begin{equation}\label{eqn:vanishingRP2}
\pi_{2n-2} (\Gamma (\rp^2 \times S^{2n-1};\Fr(\rp^2 \times S^{2n-1})\times_{O(2n+1)} \frac{\Top(2n+1)}{O(2n+1)})) \otimes \bQ=0
\end{equation}
when $n$ is even. Homotopy groups of section spaces can be computed by means of the Federer spectral sequence \cite{Federer}; see \cite[\S 5.2]{KupRWTorAlg} for the variant we shall be using. Let $E \to B$ be a fibration over a finite-dimensional CW-complex with simply connected fibre $F$ and a fixed section $s$. Then there is a spectral sequence
\[
E_{p,q}^2 = H^{-p} (B;\pi_q(F)) \Rightarrow \pi_{p+q}(\Gamma(B;E),s)
\]
(the coefficient systems in the $E_2$-term are twisted). Hence $\pi_{m}(\Gamma(B;E),s)$ admits a finite filtration whose filtration quotients are subquotients of $H^{-p} (B;\pi_q(F))$ with $p+q=m$; and so in order to prove that $\pi_{m}(\Gamma(B;E),s) \otimes \bQ=0$, it suffices to show that $H^{-p} (B;\pi_q(F)) \otimes \bQ=0$ if $p+q=m$. 

Because $\frac{\Top(2n+1)}{O(2n+1)}$ is simply connected by \cite[V.5.0(4)--(5)]{KirbSieb}, we can apply the Federer spectral in the case at hand. Furthermore, by loc.cit., $\frac{\Top(2n+1)}{O(2n+1)}$ is rationally $(2n+2)$-connected. So the only entries in the $E_2$-page which could potentially be rationally nonzero and contribute to \eqref{eqn:vanishingRP2} are 
\[
H^{2n+1-i}(\rp^2 \times S^{2n-1};\pi_{4n-1-i}(\frac{\Top(2n+1)}{O(2n+1)})\otimes \bQ)
\]
for $0 \leq i \leq 2$. It remains to be shown that 
\begin{equation}\label{eqn:cohomology-rp2}
H^{2-i}(\rp^2 ;\pi_{4n-1-i}(\frac{\Top(2n+1)}{O(2n+1)})\otimes \bQ) =0
\end{equation}
when $0 \leq i \leq 2$. 

The fundamental group $\pi_1 (\rp^2)=C_2$ acts on $\pi_{4n-1-i}(\frac{\Top(2n+1)}{O(2n+1)})$ by conjugation with an isometry of determinant $-1$.
There is an isomorphism 
\[
\pi_{4n-1-i}(\frac{\Top(2n+1)}{O(2n+1)}) = \pi_{2n-2-i}(B \Diff_\partial (D^{2n+1}))
\]
coming from Morlet's theorem \cite[V.3.4]{KirbSieb} that $B \Diff_\partial (D^{2n+1}) \simeq \Omega_0^{2n+1} (\frac{\Top(2n+1)}{O(2n+1)})$. By the discussion in \cite[\S 8.2]{KrannRW}, the action of the generator of $C_2$ on $\pi_{4n-1-i}(\frac{\Top(2n+1)}{O(2n+1)})$ corresponds under this isomorphism to \emph{minus} the involution $(BI)_*$ considered in the proof of Lemma \ref{lem:doubling-map-disc} above. 
By \cite[Corollary 8.4]{KrannRW}, we therefore have, for \emph{even} $n$, that
\[
\pi_{4n-1}(\frac{\Top(2n+1)}{O(2n+1)}) \otimes \bQ= \bQ^+,
\]
\[
\pi_{4n-2}(\frac{\Top(2n+1)}{O(2n+1)}) \otimes \bQ= 0
\]
and 
\[
\pi_{4n-3}(\frac{\Top(2n+1)}{O(2n+1)}) \otimes \bQ= \bQ^-
\]
as $C_2$-modules. Since $H^2 (\rp^2;\bQ^+)=H^0 (\rp^2;\bQ^-)=0$, we obtain \eqref{eqn:cohomology-rp2}, which concludes the proof.
\end{proof}

\bibliographystyle{plain}
\bibliography{diffalgK}

\end{document}